\def\NN{\mathbb{N}}
\def\RR{\mathbb{R}}
\def\CC{\mathbb{C}}
\def\SSS{\mathbb{S}}
\def\DD{\mathbb{D}}
\def\Im{\mathrm{Im}}
\def\Lin{\mathrm{Lin}}
\newtheorem{proposition}{Proposition}
\newtheorem{theorem}{Theorem}
\newtheorem{lemma}{Lemma}
\newtheorem{example}{Example}
\begin{document}
\title{A note on a matrix version of the Farkas Lemma}

\author{Alja\v z Zalar}

\address{Alja\v z Zalar, University of Ljubljana, Faculty of  Mathematics and Physics, Slovenia}
\email{aljaz.zalar@student.fmf.uni-lj.si}

\date{\today}

\begin{abstract} A linear polyomial non-negative on the non-negativity domain of 
finitely many linear polynomials can be expressed as their non-negative linear combination.
Recently, under several additional assumptions, Helton, Klep, and McCullough extended this result to matrix polynomials.
The aim of this paper is to study which of these additional assumptions are really necessary.
\end{abstract}

\keywords{Farkas Lemma, matrix polynomials, Positivstellensatz}

\subjclass[2010]{Primary 46L07, 14P10, 90C22; Secondary 11E25, 46L89, 13J30.}

\maketitle

\section{Introduction}

We are interested in matrix generalizations of the following variant of the Farkas lemma.

\begin{theorem} \label{farkas}
Let $f_1,f_2,\ldots,f_k$ be linear polynomials in $n$ variables, i.e., 
$f_i(x_1,x_2,$ $\ldots,x_n)=a^{(i)}_0+a^{(i)}_1 x_1+\ldots+a^{(i)}_n x_n$, where $a_j^{(i)}\in \RR$ for $i=1,2, \ldots,k$, $j=0,1, \ldots,n$. Let $K=\left\{x\in \RR^n \mid f_i(x)\geq 0 \; \mathrm{for}\;\mathrm{all}\; i=1,2, \ldots,k \right\}$. If $f$ is another linear polynomial in $n$ variables, for which $f|_K\geq 0$ holds, then there exist non-negative constants $c_i$, such that 
$$f=c_0+c_1 f_1+ c_2 f_2 + \ldots + c_k f_k.$$ 
\end{theorem}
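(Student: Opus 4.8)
The plan is to recast the conclusion as a membership statement in a finitely generated convex cone and then invoke convex separation. Put $f_0:=1$, and let $V\cong\RR^{n+1}$ be the space of linear polynomials in $x_1,\dots,x_n$ (in the sense of the theorem), coordinatized by their coefficients. Inside $V$ consider the cone $C:=\{\sum_{i=0}^k c_i f_i : c_i\ge 0\}$; it is finitely generated, hence a closed convex cone, and the theorem asserts precisely that $f\in C$. So suppose, for contradiction, that $f\notin C$. Since $C$ is closed and convex, there is a linear functional $\varphi$ on $V$ with $\varphi(f)<0\le\varphi(g)$ for every $g\in C$ (strict separation of the point $f$ from $C$; that $\varphi\ge 0$ on all of $C$, not merely above some threshold, follows because $C$ is a cone); in particular $\varphi(1)=\varphi(f_0)\ge 0$ and $\varphi(f_i)\ge 0$ for $i=1,\dots,k$.

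The next step is to translate this abstract $\varphi$ into a concrete point of $K$ (or an unbounded direction in $K$) at which $f$ is negative. Identify $\varphi$ with a pair $(\lambda_0,\lambda)\in\RR\times\RR^n$ so that $\varphi(a_0+a_1x_1+\dots+a_nx_n)=\lambda_0 a_0+\langle\lambda,(a_1,\dots,a_n)\rangle$. The case $\lambda_0<0$ cannot occur, since it would give $\varphi(1)=\lambda_0<0$. If $\lambda_0>0$, put $p:=\lambda/\lambda_0\in\RR^n$; then $\varphi(g)=\lambda_0\,g(p)$ for all $g\in V$, so $\varphi(f_i)\ge 0$ forces $f_i(p)\ge 0$ for every $i$, i.e. $p\in K$, while $\varphi(f)<0$ forces $f(p)<0$ — contradicting $f|_K\ge 0$.

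The remaining case $\lambda_0=0$ is the delicate one: here $\varphi$ depends only on the homogeneous parts, so $\langle\lambda,a^{(i)}\rangle\ge 0$ for all $i$ and $\langle\lambda,a\rangle<0$, where $a^{(i)}=(a_1^{(i)},\dots,a_n^{(i)})$ and $a=(a_1,\dots,a_n)$ are the coefficient vectors of the $x_j$'s in $f_i$ and $f$. At this point one must use that $K\neq\emptyset$ (for empty $K$ the statement as written fails: with $f_1=x_1$, $f_2=-x_1-1$, $f=x_2$ one has $K=\emptyset$ but $x_2$ is not a nonnegative combination of $1,f_1,f_2$). Fixing $x^*\in K$ and sliding along the ray $x^*+t\lambda$, $t\ge 0$, one gets $f_i(x^*+t\lambda)=f_i(x^*)+t\langle\lambda,a^{(i)}\rangle\ge f_i(x^*)\ge 0$, so the whole ray lies in $K$; but $f(x^*+t\lambda)=f(x^*)+t\langle\lambda,a\rangle\to-\infty$ as $t\to\infty$, so $f$ is negative somewhere on $K$, again a contradiction. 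Hence $f\in C$, which is the claim. (Alternatively, the whole statement can be obtained by Fourier–Motzkin elimination of the variables one at a time, with the same bookkeeping of nonnegative combinations.)

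The main obstacle is exactly the passage from the abstract separating functional back to a genuine witness — a point of $K$, or an unbounded recession direction of $K$ — which is what forces the case split on the sign of $\lambda_0$; and the $\lambda_0=0$ branch is precisely where the nonemptiness of $K$ becomes indispensable and where one sees why that hypothesis (or a surrogate for it) cannot be dropped. The remaining ingredients — that a finitely generated cone is closed (equivalently, a finite intersection of closed half-spaces), and the coordinate identification of $V$ with $\RR^{n+1}$ and of its functionals with pairs $(\lambda_0,\lambda)$ — are routine.
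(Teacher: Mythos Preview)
The paper does not supply a proof of this theorem at all: it is stated in the introduction as the classical (affine) Farkas lemma, serving only as motivation for the matrix versions that follow. So there is no ``paper's own proof'' to compare against.

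Your argument is the standard convex--separation proof and is correct. The closedness of the finitely generated cone $C$ is the only nontrivial ingredient you invoke without proof; this is a classical fact (Weyl--Minkowski), and you flag it appropriately. The case split on the sign of $\lambda_0$ and the recession-direction handling of the $\lambda_0=0$ branch are both carried out cleanly.

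Your observation that the statement, exactly as printed, fails for $K=\emptyset$ is well taken, and your counterexample ($f_1=x_1$, $f_2=-x_1-1$, $f=x_2$) is valid: the right-hand side can never produce an $x_2$ term. The paper tacitly relies on the $K\ne\emptyset$ hypothesis (or, when $K=\emptyset$, only uses the special consequence that $-1$ lies in the cone generated by the $f_i$, which \emph{does} hold and which your argument also yields by taking $f=-1$ in the $\lambda_0>0$ branch). So your critique is accurate, and your proof is complete under the implicit assumption $K\ne\emptyset$.
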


We write $\RR^{d\times d}[x]$ (resp. $\SSS\RR^{d\times d}[x]$) for the set of all polynomials whose coefficients are 
$d \times d$ (resp. symmetric $d\times d$) matrices. The evaluation of a linear polynomial $L(x)=P_0+\sum_{i=1}^{n}P_ix_i\in \SSS\RR^{d\times d}[x]$ at $X=(X_1,X_2,\ldots,X_n)\in (\SSS\RR^{m\times m})^{n}$ is defined as
$$L(X)=P_0\otimes I_m+\sum_{i=1}^{n}P_i\otimes X_i\in \SSS\RR^{dm\times dm}$$
with the usual tensor product of matrices.
Let
\begin{eqnarray*}
D_{L}(m)&=&\{X \in (\SSS\RR^{m\times m})^n \mid L(X) \succeq 0\},\\
D_L&=&\bigcup_{m=1}^\infty D_L(m).
\end{eqnarray*}
The following generalization of Theorem \ref{farkas} was obtained by Helton, Klep, and McCullough in \cite{main}. It is a special case of their Theorem 6.1.

\begin{theorem} \label{main}
Suppose $L_1=I+\sum_{i=1}^{n}P_ix_i\in \SSS\RR^{d\times d}[x]$ is a monic linear polynomial and the set
$D_{L_1}(1)= \{x \in \RR^n \mid L_1(x) \succeq 0\}$ is bounded.  Then
for every linear polynomial $L_2=R_0+\sum_{i=1}^{n}R_ix_i\in \SSS\RR^{\ell\times \ell}[x]$ with $L_2|_{D_{L_1}(1)} \succ 0$, there are matrix polynomials $A_j\in \RR^{\ell\times \ell}\left[ x\right]$ and $B_k\in \RR^{d\times \ell}\left[x\right]$ satisfying 
$$L_2=\sum_jA_j^{\ast}A_j+\sum_kB_k^{\ast}L_1 B_k.$$
\end{theorem}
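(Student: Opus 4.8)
The plan is a convex-duality argument combined with a Gelfand--Naimark--Segal (GNS) construction. Let $M\subseteq\SSS\RR^{\ell\times\ell}[x]$ be the quadratic module generated by $L_1$, i.e.\ the convex cone of all matrix polynomials $\sum_j A_j^{\ast}A_j+\sum_k B_k^{\ast}L_1B_k$ with $A_j\in\RR^{\ell\times\ell}[x]$ and $B_k\in\RR^{d\times\ell}[x]$; it is stable under the congruences $p\mapsto C^{\ast}pC$ and contains $I=I^{\ast}I$, and the theorem is exactly the assertion $L_2\in M$. Since $D_{L_1}(1)$ is compact, $L_2|_{D_{L_1}(1)}\succ 0$ yields a $\delta>0$ with $L_2\succeq 2\delta I$ on $D_{L_1}(1)$. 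Because $\delta I\in M$, it suffices to prove that $g:=L_2-\delta I$ lies in the Archimedean closure $\widehat M:=\{p:\ p+\varepsilon I\in M\ \text{for all }\varepsilon>0\}$ of $M$, for then $L_2=g+\delta I\in M$; I would prove $g\in\widehat M$ by contradiction.

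This first requires $M$ to be Archimedean, which is where boundedness of $D_{L_1}(1)$ enters: it is a standard fact that the quadratic module generated by a monic linear pencil with bounded spectrahedron is Archimedean, so there is an integer $N$ with $NI_\ell-\big(\sum_i x_i^2\big)I_\ell\in M$. (After reducing to the scalar quadratic module this is just the Putinar-type representation $N-\sum_i x_i^2=(\text{sum of squares})+\sum_k b_k^{T}L_1b_k$ available on a bounded spectrahedron.) Hence $(\SSS\RR^{\ell\times\ell}[x],M,I)$ is an Archimedean ordered vector space with order unit $I$, so, by the standard state-space description of such spaces, if $g\notin\widehat M$ there is a state $\mu\colon\SSS\RR^{\ell\times\ell}[x]\to\RR$ -- linear, $\mu\ge 0$ on $M$, $\mu(I)=1$ -- with $\mu(g)<0$. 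The Archimedean element will also bound the operators produced below.

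Now I would run the GNS construction on $\mu$. On $\RR^\ell[x]$ put $\langle u,v\rangle:=\mu\big(\tfrac12(uv^{T}+vu^{T})\big)$; this form is positive semidefinite because $uu^{T}=(u^{T})^{\ast}(u^{T})\in M$. The multiplication operators $X_i\colon u\mapsto x_iu$ are symmetric and, the variables being commutative, pairwise commuting, and the Archimedean element forces $\sum_i\|X_iu\|^2=\mu\big((\sum_ix_i^2)uu^{T}\big)\le N\|u\|^2$ (one checks $(N-\sum_ix_i^2)uu^{T}\in M$ by expanding $N-\sum_ix_i^2$ in the scalar module and pulling scalar factors through the $b_k^{T}L_1b_k$ terms). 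Thus the $X_i$ descend to the Hilbert-space completion $H$ of $\RR^\ell[x]$ modulo the radical of the form, where they become commuting bounded self-adjoint operators, and the spectral theorem supplies a projection-valued measure $E$ on $\RR^n$ with $X_i=\int x_i\,dE$. Because $\mu\ge 0$ on the elements $W^{\ast}L_1W$ ($W\in\RR^{d\times\ell}[x]$), a short computation gives $L_1(X)=\int L_1(x)\,dE(x)\succeq 0$, which forces $E$ to be supported on $\{x\in\RR^n:L_1(x)\succeq 0\}=D_{L_1}(1)$.

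It remains to reach a contradiction. Taking the vector $\zeta:=\sum_{r=1}^{\ell}e_r\otimes[\mathbf e_r]$, where $\mathbf e_r\in\RR^\ell[x]$ is the constant polynomial $e_r$, one gets $\|\zeta\|^2=\mu(I_\ell)=1$ and, using $\langle[\mathbf e_s],[\mathbf e_r]\rangle=\mu(\mathbf e_s\mathbf e_r^{T})$ and symmetry of the coefficients of $L_2$, the identity $\langle L_2(X)\zeta,\zeta\rangle=\mu(L_2)=\mu(g)+\delta<\delta$. On the other hand $E$ is supported on $D_{L_1}(1)$, where $L_2\succeq 2\delta I$, so $L_2(X)=\int L_2(x)\,dE(x)\succeq 2\delta I$ and $\langle L_2(X)\zeta,\zeta\rangle\ge 2\delta$; since $\delta>0$ this is impossible. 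Hence $g\in\widehat M$ and $L_2\in M$. The step I expect to be the main obstacle is establishing the Archimedean property of $M$ from boundedness of $D_{L_1}(1)$ -- the one place where the monic linear structure of $L_1$ is genuinely exploited, and which is not a formality -- together with the bookkeeping that passes between $M$ and its Archimedean closure $\widehat M$, i.e.\ using the strictness in $L_2\succ 0$ to turn an approximate membership into the exact algebraic identity demanded by the theorem.
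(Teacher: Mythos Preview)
The paper does not prove Theorem~\ref{main} at all: it is quoted as a special case of Theorem~6.1 of Helton, Klep and McCullough \cite{main} and then used as a black box in the remainder of the paper. So there is no ``paper's own proof'' to compare your attempt against.

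That said, your outline is essentially the argument one finds in \cite{main} itself: pass to the Archimedean quadratic module, separate by a state, run a GNS construction to obtain commuting bounded self-adjoint operators, and use the spectral theorem to localize on $D_{L_1}(1)$, contradicting strict positivity of $L_2$. The skeleton is correct. Three places deserve more care if you want a self-contained proof rather than a sketch. First, the Archimedean property---that $N-\sum_i x_i^2$ lies in the \emph{scalar} quadratic module generated by $L_1$ for some $N$---is not a formality; you correctly flag it as the crux, and it is exactly here that monicity and boundedness of $D_{L_1}(1)$ are used (this is Proposition~2.4 in \cite{main}, not a consequence of Putinar's theorem, which would be circular). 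Second, the joint spectral theorem you invoke lives on a complex Hilbert space, so you should complexify $H$ before applying it; the operators $X_i$ then remain self-adjoint and the conclusion descends back to the real picture. Third, the passage from $L_1(X)\succeq 0$ to $\mathrm{supp}\,E\subseteq D_{L_1}(1)$ needs one line: if $x_0\notin D_{L_1}(1)$ pick $a\in\RR^d$ with $a^{T}L_1(x_0)a<0$, then $a^{T}L_1(\cdot)a<0$ on a neighbourhood $U$ of $x_0$, and $E(U)\ne 0$ would give a vector $a\otimes v$ with $\langle L_1(X)(a\otimes v),a\otimes v\rangle<0$. With these points filled in, your argument is a faithful reconstruction of the proof in the cited source.
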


Note that this result also covers the case of several constraints; simply take $L_1$ to be their direct sum.

It was already shown in \cite{main} that the matrix polynomials $A_j, B_k$ need not be constant unless the condition $L_2|_{D_{L_{1}}(1)}\succ 0$ is replaced with $L_2|_{D_{L_{1}}}\succeq 0$; cf. [2, LP-satz].

The aim of this paper is to study the necessity of the following assumptions in Theorem \ref{main}:

\begin{enumerate}
\item \textbf{Boundedness of $D_{L_1}(1)$:} By Example $1$ this assumption cannot be removed.
\item \textbf{Monicity of $L_1$:} In Theorem \ref{diagonal} we prove that for diagonal $L_1$ monicity can be removed from Theorem $\ref{main}$. 
The general case remains open.
\item \textbf{Strict positivity of $L_2|_{D_{L_1}(1)}$:} In Section $4$ we show that this assumption can be replaced with 
$L_2|_{D_{L_1}(1)} \succeq 0$ in the following special cases:
\begin{itemize}
\item In the one-variable case (even if $D_{L_1}(1)$ is unbounded).
\item When the span of the coefficients of $L_1$ is closed under multiplication.
\end{itemize}
The general case remains open. However, by Example 2 simultaneous generalization to non-monic $L_1$
and non-strict $L_2|_{D_{L_1}(1)}$ is not possible.
\end{enumerate}

\section{Boundedness of $D_{L_1}(1)$}

The assumption of boundedness in Theorem \ref{main} cannot be removed, because of the following example: 

\begin{example}
For linear polynomials
$$
L_1=
\left[
\begin{array}{ccc}
1+x_1&0&0\\
0&1+x_1+x_2&0\\
0&0&1+x_2\\
\end{array}
\right], \quad 
L_2=\left[
\begin{array}{cc}
1+\frac{1}{3}x_1&\frac{3}{4}\\
\frac{3}{4}&1+\frac{1}{3}x_2\\
\end{array}
\right]$$
we have that the set $D_{L_1}(1)$ is contained in the set $\tilde{D}_{L_2}(1):=\left\{(x_1, x_2)\in \RR^2\mid \right.$ $\left.L_2(x_1,x_2)\succ 0\right\}$, but $L_2$ cannot be expressed as $\sum_j A_j^{\ast}A_j+\sum_k B_k^{\ast}L_1 B_k$. This implies that the boundedness of $D_{L_1}(1)$ is needed in Theorem $2$.
\end{example}

\begin{proof}
We have $D_{L_1}(1)=\left\{(x_1,x_2)\in \RR^2\mid x_1\geq -1, x_1+x_2\geq -1, x_2\geq -1\right\}$.
On the other hand, the fact $1+\frac{1}{3}x_1>0$ on $D_{L_1}(1)$ together with
\begin{eqnarray*}
\det(L_2) &=&
\left(1+\frac{1}{3}x_1\right)\left(1+\frac{1}{3}x_2\right)-\left(\frac{3}{4}\right)^2=\\
&=&\left(\frac{2}{3}+\frac{1}{3}(1+x_1)\right)\left(\frac{2}{3}+\frac{1}{3}(1+x_2)\right)-\left(\frac{3}{4}\right)^2=\\
&=&\left(\frac{2}{3}\right)^2+\frac{2}{9}\left(1+x_1+1+x_2\right)+\frac{1}{9}\left(1+x_1\right) \left(1+x_2\right)-\left(\frac{3}{4}\right)^2=\\
&=&\frac{5}{48}+\frac{2}{9}\left(1+x_1+x_2\right)+\frac{1}{9}\left(1+x_1\right)\left(1+x_2\right),
\end{eqnarray*}
where the second and the third summand in the last line are non-negative on $D_{L_1}(1)$, gives $L_2|_{D_{L_1}(1)}\succ 0$.

Now we are going to show, that if $L_2$ can be expressed as $L_2=\sum_j A_j^{\ast}A_j+\sum_k B_k^{\ast}L_1 B_k$,  then $A_j$ and $B_k$ can be assumed to be constant matrices. Let us denote $A_j, B_k$ as 
$$\left[
\begin{array}{cc}
P^{(j)}_1(x_1,x_2)&R^{(j)}_1(x_1,x_2)\\
P^{(j)}_2(x_1,x_2)&R^{(j)}_2(x_1,x_2)\\
\end{array}
\right]
\quad, 
\left[
\begin{array}{cc}
p^{(k)}_1(x_1,x_2)&r^{(k)}_1(x_1,x_2)\\
p^{(k)}_2(x_1,x_2)&r^{(k)}_2(x_1,x_2)\\
p^{(k)}_3(x_1,x_2)&r^{(k)}_3(x_1,x_2)\\
\end{array}
\right]
.$$ 

Comparing the entry $(11)$ in $\sum_j A_j^{\ast}A_j+\sum_k B_k^{\ast}L_1 B_k$ and $L_2$ gives
$$\sum_j\left((P_1^{(j)}(x_1,x_2))^2+(P_2^{(j)}(x_1,x_2))^2\right) +\sum_k\left((p_1^{(k)}(x_1,x_2))^2(1+x_1)+\right.$$
$$\left.+(p_2^{(k)}(x_1,x_2))^2(1+x_1+x_2)+(p_3^{(k)}(x_1,x_2))^2(1+x_2)\right)\overbrace{=}^{?}1+\frac{1}{3}x_1.$$
By observing the monomial of the form $Kx_i^{n}, n\in \NN, K\neq 0, i=1,2,$ of the highest degree on the left side, it can be seen, that monomials $Ax_i^{n}, n\in\NN, A\neq 0, i=1,2$, do not appear in any $P_i^{(j)}$ nor $p_i^{(k)}$. With the same reasoning applied to the entry $(22)$ not even in $R_i^{(j)}$ and $r_i^{(k)}$. Further on, since the monomial $Kx_1 ^mx_2^n$, $K\neq 0$, $m,n \in \NN$, from $P_i^{(j)}$ or $p_i^{(k)}$, does not multiply into monomial $K$ or $Kx_1$, $K\neq 0$, it is not needed to satisfy the upper equality. Similar reasoning can be applied to the other entries, hence WLOG $A_j$, $B_k$ are constant matrices.

The comparison of coefficients in $\sum_j A_j^{\ast}A_j+\sum_k B_k^{\ast}L_1 B_k$ and $L_2$ in a constant-matrix case gives the following equalities:

Entry $(11)$:
\begin{equation}\label{(1)}
x_2:\quad \sum_k \left((p^{(k)}_2)^2 + (p^{(k)}_3)^2 \right)= 0 \quad \Rightarrow \quad p^{(k)}_2=p^{(k)}_3= 0, \;  \forall k
\end{equation}
\begin{equation}\label{(2)}
x_1:\quad \sum_k \left((p^{(k)}_1)^2 + (p^{(k)}_2)^2\right) = \frac{1}{3}
\end{equation}
\begin{equation}\label{(3)}
1:\quad \sum_{j} \left(\sum_{i=1}^2(P^{(j)}_i)^2\right)+ \sum_k \left((p^{(k)}_1)^2 + (p^{(k)}_2)^2 + (p^{(k)}_3)^2\right) = 1
\end{equation}

Entry $(12)$(=entry $(21)$):
\begin{equation}\label{(4)}
x_2:\quad \sum_k \left(p^{(k)}_2 r^{(k)}_2 + p^{(k)}_3 r^{(k)}_3\right)= 0
\end{equation}
\begin{equation}\label{(5)}
x_1:\quad \sum_k \left(p^{(k)}_1 r^{(k)}_1 + p^{(k)}_2 r^{(k)}_2\right)= 0
\end{equation}
\begin{equation}\label{(6)}
1:\quad \sum_{j} \left(\sum_{i=1}^2P^{(j)}_i R^{(j)}_i\right)+ \sum_{k} \left(\sum_{i=1}^3 p^{(k)}_i r^{(k)}_i\right) = \frac{3}{4}
\end{equation}

Entry $(22)$:
\begin{equation}\label{(7)}
x_2:\quad \sum_k \left((r^{(k)}_2)^2 + (r^{(k)}_3)^2 \right)= \frac{1}{3}
\end{equation}
\begin{equation}\label{(8)}
x_1:\quad \sum_k \left((r^{(k)}_1)^2 + (r^{(k)}_2)^2\right) = 0 \quad \Rightarrow \quad r^{(k)}_1=r^{(k)}_2= 0, \;  \forall l
\end{equation}
\begin{equation}\label{(9)}
1:\quad \sum_{j} \left(\sum_{i=1}^2(R^{(j)}_i)^2\right)+ \sum_k \left((r^{(k)}_1)^2 + (r^{(k)}_2)^2 + (r^{(k)}_3)^2\right) = 1
\end{equation}

\hfill We will see, that the upper equalities cannot be simultaneously satisfied.\\
From \ref{(1)} and \ref{(8)} we conclude $\sum_{k} \left(\sum_{i=1}^3 p^{(k)}_i r^{(k)}_i\right)=0$. We use this in \ref{(6)} and get
$\sum_{j} \left(\sum_{j=1}^2 P^{(j)}_i R^{(j)}_i\right)= \frac{3}{4}$. Using $\ref{(1)}$ and \ref{(2)} in \ref{(3)} gives
$\sum_{j} \left(\sum_{i=1}^2(P^{(j)}_i)^2\right)= \frac{2}{3}$. Similarly using \ref{(7)} and \ref{(8)} in \ref{(9)} gives
$\sum_{j} \left(\sum_{i=1}^2(R^{(j)}_i)^2\right)= \frac{2}{3}$.

The following chain of (in)equalities should hold:
$$\frac{2}{3}=\frac{\frac{2}{3}+\frac{2}{3}}{2}=\sum_{j}\sum_{i=1}^2 \frac{(P^{(j)}_i)^2+ (R_i^{(j)})^2}{2} \geq  \sum_{j}\sum_{i=1}^2 \left|P^{(j)}_i R^{(j)}_i \right| \geq \sum_{j}\sum_{i=1}^2 P^{(j)}_i R^{(j)}_i = \frac{3}{4},$$
where the first inequality follows from AG-inequalites applied to pairs $\left\{(P^{(j)}_i)^2,\right.$ $\left. (R_i^{(j)})^2\right\}$, i.e., $\frac{(P^{(j)}_i)^2+ (R_i^{(j)})^2 }{2}\geq \left|P^{(j)}_i R^{(j)}_i \right|$, $\forall$ $i$, $j$.

We conclude $\frac{2}{3}\geq \frac{3}{4}$, which is obviously a contradiction.
\end{proof}

\section{Monicity of $L_1$}

In this section we show, that for diagonal $L_1$ monicity in Theorem $\ref{main}$ can be removed. In Proposition $\ref{smallest}$ we first prove the case of the set $D_{L_1}(1)$ being a singleton and then also the other cases of bounded $D_{L_1}(1)$ in Theorem $\ref{diagonal}$.

\begin{proposition} $\label{smallest}$
Suppose $L_1 \in \SSS\RR^{d\times d}[x]$ is a diagonal linear polynomial and the set $D_{L_1}(1)=\left\{a\right\}$. Then for every linear polynomial $L_2\in \SSS\RR^{\ell\times\ell}[x]$ with $L_2|_{D_{L_1}(1)} \succeq 0$, there are matrix polynomials $A_j\in \RR^{\ell\times\ell}\left[ x\right]$ and $B_k\in \RR^{d\times \ell}\left[x\right]$ satisfying 
$$L_2=\sum_jA_j^{\ast}A_j+\sum_kB_k^{\ast}L_1 B_k.$$
\end{proposition}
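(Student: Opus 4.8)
The plan is to reduce the statement to a purely linear-algebraic fact about a single point by exploiting that $L_1$ is diagonal. Write $L_1 = \mathrm{diag}(\ell_1,\ldots,\ell_d)$ with each $\ell_i$ a scalar linear polynomial, $\ell_i(x) = a^{(i)}_0 + \sum_j a^{(i)}_j x_j$. The condition $D_{L_1}(1) = \{a\}$ says the polyhedron $\{x : \ell_i(x) \ge 0 \text{ for all } i\}$ is the single point $a$. First I would normalize: after an affine change of variables I may assume $a = 0$, so each $\ell_i(0) = \ell_i$'s constant term is $\ge 0$, and the feasible cone $\{x : \ell_i(x) \ge 0\}$ degenerates to $\{0\}$. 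This forces the homogeneous parts to cut out only the origin, i.e. the linear forms $\{\sum_j a^{(i)}_j x_j\}$ must positively span (the dual of) $\RR^n$.

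Next I would handle $L_2$ at the point $a=0$: the hypothesis $L_2|_{D_{L_1}(1)} \succeq 0$ is just $L_2(0) = R_0 \succeq 0$, a constant PSD matrix. So I can peel off a constant SOS term: write $R_0 = C^\ast C$ for some constant matrix $C$ (Cholesky), contributing one $A_j = C$ with all $x_i = 0$ in it. It then remains to represent $L_2 - R_0 = \sum_i R_i x_i$, a linear polynomial vanishing at the origin, in the form $\sum_j A_j^\ast A_j + \sum_k B_k^\ast L_1 B_k$. The key observation is that the scalar linear polynomials $1+x_i$-type building blocks from the diagonal $L_1$ can be combined: since the $\ell_i$ positively span the origin, there exist $\lambda_i \ge 0$ with $\sum_i \lambda_i \ell_i = c$ a positive constant, and more importantly each coordinate $x_j$ can be written as a $\RR$-linear combination (not necessarily non-negative) of the $\ell_i$ minus their constant terms. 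I would use these relations to absorb each monomial $R_i x_i$ by choosing the $B_k$ to be degree-one matrix polynomials whose cross-terms $B_k^\ast L_1 B_k$ produce exactly the required linear part, with the unwanted constant and quadratic leftovers mopped up by additional constant SOS terms and by adjusting $C$. Concretely, for a rank-one target $v v^\ast \otimes (\text{linear form})$ one sets $B_k = (\alpha + \beta x)$-shaped so that the $x$-linear coefficient of $B_k^\ast (\text{appropriate }\ell_i) B_k$ lands on target; the completion-of-squares identity $B^\ast \ell B$ with $B$ affine is where the computation sits.

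The main obstacle I anticipate is the \emph{sign} bookkeeping: the coefficients $R_i$ of $L_2$ are arbitrary symmetric matrices, and the only genuinely "positive" ingredient available is the SOS part $\sum A_j^\ast A_j$ and the finitely many $\ell_i$ (each used with a PSD "weight" $B_k^\ast(\cdot)B_k$). I expect to need a lemma of the shape: any symmetric matrix $S$ can be written as $S = \sum_k B_k^\ast \Lambda B_k - \sum_j A_j^\ast A_j$ where $\Lambda$ ranges over a positively-spanning set of sign patterns — essentially a matricial Positivstellensatz at a point, which should follow because $\{\pm e_i\}$-type directions are available once the $\ell_i$ positively span. Making the degree-one ansatz for $B_k$ close up exactly (so that no degree-$\ge 2$ terms survive and the constant terms match $R_0$ after subtracting $C^\ast C$) is the delicate part; the cleanest route is probably to prove it first for $d = 1$, $n = 1$ (a single $\ell = 1 + x$ style constraint, where $D_{L_1}(1) = \{-1\}$ would not be a point, so actually one needs $\ell$ and $-\ell+\text{const}$, i.e.\ $d \ge 2$), establish the one-point matricial identity there by an explicit completion of squares, and then lift to general diagonal $L_1$ by taking direct sums and using the positive-spanning relations to express a general linear $L_2$ as a combination of these atomic pieces.
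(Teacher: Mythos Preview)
Your broad strategy matches the paper's: translate so that $a=0$, peel off the constant term $R_0=C^\ast C$, and then express each $R_i x_i$ using the diagonal entries $\ell_i$ of $L_1$ together with the fact that (by scalar Farkas applied to the singleton $\{0\}$) both $x_j$ and $-x_j$ lie in the cone $c_0+\sum c_i\ell_i$ with all $c_i\ge 0$.

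The gap is in your execution plan. Taking the $B_k$ to be degree-one is the wrong ansatz: since $L_1$ is itself linear, $B_k^\ast L_1 B_k$ is then of degree \emph{three}, not two, and there is no mechanism on the table to cancel the cubic terms (your ``quadratic leftovers'' remark already undercounts the degree). The ``completion of squares with affine $B$'' route does not close up.

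The paper avoids this entirely by using only \emph{constant} $B_k$, in two clean steps. First, it passes to the auxiliary diagonal polynomial
\[
\tilde L_1=\begin{bmatrix}x_1&0\\0&-x_1\end{bmatrix}\oplus\cdots\oplus\begin{bmatrix}x_n&0\\0&-x_n\end{bmatrix},
\]
which also has $D_{\tilde L_1}(1)=\{0\}$; scalar Farkas applied entrywise gives $\tilde L_1=\sum_j \tilde A_j^\ast \tilde A_j+\sum_k \tilde B_k^\ast L_1\tilde B_k$ with constant $\tilde A_j,\tilde B_k$, so it suffices to represent $L_2$ from $\tilde L_1$. Second, for each $i$ one uses the elementary lemma (the paper's Proposition~2): for any symmetric $R\in\SSS\RR^{\ell\times\ell}$ there exist constant $B_k\in\RR^{2\times\ell}$ with
\[
\sum_k B_k^\ast\begin{bmatrix}x&0\\0&-x\end{bmatrix}B_k=Rx,
\]
which is immediate from the congruence of $R$ to $\mathrm{diag}(\pm 1,0,\ldots,0)$. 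This handles each $R_i x_i$ with constant weights, and no higher-degree bookkeeping is needed. Your last paragraph (``one needs $\ell$ and $-\ell+\text{const}$'') is reaching toward exactly this lemma; once you see that $\pm x_i$ are both available, the degree-one $B_k$ idea should be abandoned in favor of this constant-weight decomposition.
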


In the proof we will use the following proposition:

\begin{proposition}$\label{smallest2}$
For every $A\in\SSS\RR^{\ell\times \ell}$ there exist $B_k\in\SSS\RR^{2\times \ell}$, such that
$$\sum_k B_k^{\ast}
\left[ 
\begin{array}{cc}
x&0\\
0&-x
\end{array}
\right]
B_k = Ax.$$
\end{proposition}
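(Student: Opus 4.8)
The plan is to reduce the statement to a diagonalization of $A$ together with the scalar identity $x = \big(\tfrac{\alpha+1}{2\sqrt{\alpha}}\big)^2 x - \big(\tfrac{\alpha-1}{2\sqrt{\alpha}}\big)^2 x$ applied entrywise. First I would diagonalize the symmetric matrix $A$: write $A = Q^{\ast} D Q$ with $Q$ orthogonal and $D = \mathrm{diag}(\lambda_1,\dots,\lambda_\ell)$. Since conjugating the desired representation by $Q$ only replaces each $B_k$ by $B_k Q$, it suffices to treat the case $A = D$ diagonal. Moreover the left-hand side is additive in $A$: if $Ax$ and $A'x$ each have such a representation, so does $(A+A')x$ by concatenating the families of $B_k$'s. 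Hence it is enough to produce a representation for $A = \lambda\, e_{ii}$, i.e. for the rank-one diagonal matrix with a single entry $\lambda \in \RR$ on the diagonal.

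For $A = \lambda e_{ii}$ the target is the $\ell\times\ell$ matrix polynomial all of whose entries vanish except the $(i,i)$ entry, which equals $\lambda x$. I would look for a single $B = B_1 \in \SSS\RR^{2\times\ell}$ (real, not necessarily symmetric as a $2\times\ell$ matrix — note $\SSS$ here just flags the codomain convention) of the form $B = \begin{bmatrix} u\, e_i^{\top} \\ v\, e_i^{\top}\end{bmatrix}$, where $e_i$ is the $i$-th standard basis vector of $\RR^\ell$ and $u,v\in\RR$ are scalars to be chosen. Then
\[
B^{\ast}\begin{bmatrix} x & 0 \\ 0 & -x\end{bmatrix} B = (u^2 - v^2)\, x\, e_i e_i^{\top},
\]
so I just need $u^2 - v^2 = \lambda$, which is solvable over $\RR$ for every $\lambda$ (take $u = \tfrac{\lambda+1}{2}$, $v = \tfrac{\lambda-1}{2}$ when $\lambda\ge 0$, and swap the roles, i.e. use $v^2-u^2$ with a column in the second row only, when $\lambda<0$; equivalently just allow one $B_k$ per sign). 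Concatenating these $\ell$ blocks (one for each diagonal entry of $D$) and then conjugating back by $Q$ yields the claimed $B_k\in\SSS\RR^{2\times\ell}$ with $\sum_k B_k^{\ast}\,\mathrm{diag}(x,-x)\,B_k = Ax$.

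There is essentially no hard step here — the only thing to be careful about is the sign of $u^2-v^2$, which is why I would phrase the rank-one reduction so that positive and negative eigenvalues of $A$ are each handled by their own $B_k$, guaranteeing real scalars throughout. One could even avoid splitting by cases entirely: allowing each $B_k$ to be $2\times\ell$ means a single block $B = \begin{bmatrix} u\, w^{\top} \\ v\, w^{\top}\end{bmatrix}$ contributes $(u^2-v^2)\, x\, w w^{\top}$, and summing over an eigenbasis $\{w_t\}$ of $A$ with weights chosen so that $u_t^2 - v_t^2 = \lambda_t$ reproduces $Ax$ in one pass; the decomposition into the additive/rank-one reductions above is merely the cleanest way to present it. Either way the proof is short, the essential content being the classical fact that every real number is a difference of two squares of reals.
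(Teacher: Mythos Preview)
Your argument is correct and follows essentially the same route as the paper: reduce $A$ to diagonal form and then manufacture each diagonal entry from $\mathrm{diag}(x,-x)$ via a single $2\times\ell$ matrix supported on one column. The only cosmetic difference is that the paper uses Sylvester's inertia theorem to make the diagonal entries equal to $1$, $-1$, or $0$ (so that only $\pm E_{ii}$ need be produced), whereas you orthogonally diagonalize and then solve $u^2-v^2=\lambda$ for the actual eigenvalue. Incidentally, your choice $u=\tfrac{\lambda+1}{2}$, $v=\tfrac{\lambda-1}{2}$ already satisfies $u^2-v^2=(u+v)(u-v)=\lambda$ for \emph{every} real $\lambda$, so the sign case-split you mention is unnecessary.
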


\begin{proof}
According to the well-known fact every real symmetric matrix is real congruent to a diagonal $D$ with
elements $1$,$-1$ and $0$ on the diagonal, i.e., $A=\sum_k \tilde{B}_k^{\ast}D\tilde{B}_k$, where $D$, $\tilde{B}_k\in \SSS\RR^{\ell\times \ell}$. $Dx$ can be constructed from 
$\left[ 
\begin{array}{cc}
x&0\\
0&-x
\end{array}
\right]$ with the aim of equalities: 
\begin{eqnarray*}
E_{ii} &=& \left[\begin{array}{cc} e_i & 0 \end{array}\right]
\left[\begin{array}{cc} 1 & 0 \\ 0   &  -1 \end{array}\right]
\left[\begin{array}{c} e_i^{\ast}\\ 0 \end{array} \right],\\
-E_{ii} &=& \left[\begin{array}{cc} 0   &   e_i\\ \end{array} \right]
\left[\begin{array}{cc} 1   &   0\\ 0   &  -1 \end{array}\right]
\left[\begin{array}{c} 0\\ e_i^{\ast} \end{array}\right],
\end{eqnarray*}
where $e_i$ denotes the standard $\RR^{\ell\times 1}$ vector.
\end{proof}

\begin{proof} [Proof of Proposition \ref{smallest}]
Up to translation we may assume $D_{L_1}(1)=\left\{0\right\}$. For a polynomial

$$\tilde{L}_1=
\left[
\begin{array}{cc}
x_{1}&0\\
0&-x_{1}\\
\end{array}
\right]
\oplus
\cdots
\oplus
\left[
\begin{array}{cc}
x_{n}&0\\
0&-x_{n}\\
\end{array}
\right]$$ 
we have $D_{\tilde{L}_1}(1)=\left\{0\right\}$. After applying Theorem $\ref{farkas}$ to diagonal entries of $L_1$ and each diagonal entry of $\tilde{L}_1$, it follows $\tilde{L}_1=\sum_j \tilde{A}_j^{\ast}\tilde{A}_j +\sum_l \tilde{B}_k^{\ast}L_1 \tilde{B}_k$ for some constant $\tilde{A}_j$, $\tilde{B}_k$. Therefore it suffices to find $A_j, B_k$, such that $L_2=\sum_j A_j^{\ast}A_j +\sum_k B_k^{\ast}\tilde{L}_1 B_k$.

If we write $L_2(x)= R_0 + \sum_{i=1}^{n}R_i x_i$, then $L_2(0)=R_0\succeq 0$. So there exists $A$, such that $R_0=A^{\ast}A$. According to Proposition $\ref{smallest2}$, $R_i x_i$ can be expressed in a desired way with
$\left[
\begin{array}{cc}
x_i&0\\
0&-x_i
\end{array}
\right]
$, hence also with $\tilde{L}_1$.

Since $i$ was arbitrary, we are done.
\end{proof}

Now we will extend Proposition $\ref{smallest}$ to the general case. One additional lemma will be needed for that.

\begin{lemma}$\label{lemma1}$
Suppose $L=P_0+\sum_{i=1}^{n} P_i x_i\in \SSS\RR^{d\times d}[x]$ is a linear polynomial and let $0$ be an interior point of the set $D_{L_1}(1)$. Then there exists a monic linear polynomial $\tilde{L}=I+\sum_{i=1}^{n} \tilde{P}_i x_i\in \SSS\RR^{\tilde{d}\times \tilde{d}}[x]$, where $\tilde{d}\leq d$ and $D_{L}(1)=D_{\tilde{L}}(1)$, such that $\tilde{L}=C^{\ast} L C$ and $L=D^{\ast} \tilde{L} D$, where $C\in \RR^{d\times \tilde{d}}$, $D\in \RR^{\tilde{d}\times d}$.
\end{lemma}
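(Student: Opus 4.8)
The plan is to peel off the part of $L$ that is "constant" on directions where $L$ is definite, and throw away the directions where $L$ is identically $0$ or degenerate, so that what remains is a monic linear polynomial with the same non-negativity domain. Concretely, since $0$ is an interior point of $D_L(1)$, we have $L(0) = P_0 \succeq 0$. Write $K = \Ker P_0$ and let $\Pi$ be orthogonal projection onto $K^\perp \subseteq \RR^d$, of rank $\tilde d \leq d$. First I would argue that $K$ is an \emph{invariant subspace for all the data}, in the sense that for $x$ small enough, $L(x)$ restricted to $K$ vanishes: since $0$ is interior, for every unit vector $v$ there is $\varepsilon>0$ with $L(\varepsilon x) \succeq 0$ and $L(-\varepsilon x)\succeq 0$ for all small $x$, and if $u \in K$ then $u^\ast L(\pm\varepsilon x) u = \pm \varepsilon\, u^\ast (\sum_i P_i x_i) u \geq 0$ forces $u^\ast P_i u = 0$ for all $i$; then a standard Schur-complement / positive-semidefiniteness argument (a PSD block matrix with a zero diagonal block has the corresponding off-diagonal block zero) forces $P_i u = 0$ as well for every $i$. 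Hence $K \subseteq \Ker P_i$ for all $i$, so $L(x)$ annihilates $K$ for every $x$.

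Given that, the natural candidate is to set $\tilde L(x) := \Pi\, L(x)\, \Pi^\ast$ viewed as a polynomial in $\SSS\RR^{\tilde d \times \tilde d}[x]$, i.e. restrict everything to $K^\perp$. I would take $C$ to be the inclusion $K^\perp \hookrightarrow \RR^d$ (an isometry, $C^\ast C = I_{\tilde d}$, $C C^\ast = \Pi$) so that $\tilde L = C^\ast L C$ by definition, and conversely $L = C \tilde L C^\ast$ because $L$ kills $K = (\mathrm{Im}\,C)^\perp$ on both sides; this already gives the second identity with $D = C^\ast$. It remains to check two things: (a) $\tilde L$ is genuinely monic, i.e. $\tilde L(0) = C^\ast P_0 C = I_{\tilde d}$ — but $C^\ast P_0 C$ is just $P_0$ restricted to $K^\perp$, which is positive definite, not the identity, so I would first replace $P_0$ by applying a congruence: write $P_0 = S^\ast S$ with $S$ of rank $\tilde d$, factor through $S$ appropriately, or equivalently choose the basis of $K^\perp$ to be $P_0$-orthonormal. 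This rescaling also rescales $C$ and $D$, which is harmless. (b) $D_L(1) = D_{\tilde L}(1)$: for $x \in \RR^n$, $L(x) \succeq 0$ on all of $\RR^d$ iff it is $\succeq 0$ on $K^\perp$ (since it's zero on $K$ and $K \perp K^\perp$ with no cross terms), which is exactly $\tilde L(x) \succeq 0$; here I use again that $L(x)$ has $K$ in its kernel for every $x$, not just small $x$, which follows from $K \subseteq \bigcap_i \Ker P_i \cap \Ker P_0$.

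The main obstacle is the step showing $K \subseteq \Ker P_i$ for all $i$ — i.e. that the common kernel of $P_0$ is forced into the kernel of each $P_i$ purely from $0$ being an interior point of $D_L(1)$. The inequality $u^\ast P_i u = 0$ is immediate from the $\pm$ argument above, but upgrading "$u^\ast(\sum_i x_i P_i)u = 0$ for all small $x$" to "$P_i u = 0$" needs care: one does it by noting that $u^\ast L(x) u \ge 0$ near $0$ and equals $0$ at $x=0$, so $0$ is a minimum of the linear (hence affine) function $x \mapsto u^\ast L(x) u$, forcing it identically $0$; then for the vector conclusion use that $\begin{bmatrix} u^\ast L(x) u & u^\ast L(x) w \\ w^\ast L(x) u & w^\ast L(x) w\end{bmatrix} \succeq 0$ with zero $(1,1)$ entry forces $u^\ast L(x) w = 0$ for all $w$, i.e. $L(x) u = 0$, for every $x$ in a neighborhood of $0$, hence (by linearity in $x$) $P_i u = 0$ for every $i$ and $P_0 u = 0$. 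Once this is in place, the rest — defining $\tilde L$, $C$, $D$, checking monicity after a congruence and checking equality of the domains — is essentially bookkeeping with orthogonal projections.
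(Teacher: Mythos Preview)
Your argument is correct and follows essentially the same route as the paper: restrict $L$ to $(\Ker P_0)^\perp=\Im P_0$ and then conjugate by $P_0|_{\Im P_0}^{-1/2}$ to make the constant term the identity. The only difference is that the paper cites \cite{main} for the key inclusion $\Ker P_0\subseteq\bigcap_i\Ker P_i$, whereas you supply a direct proof of it via the $2\times2$ PSD-block argument.
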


\begin{proof}
Since $0$ is an interior point, $P_0\succeq 0$ and $\Im(P_i)\subseteq \Im(P_0)$ for $i=1,\ldots,n$ (See [$2$, Proof of Proposition $2.1$].). We have $P_0=V^{\ast}DV$, where $D$ is diagonal and $V$ orthogonal. Further on $V^{\ast}LV=L|_{\Im(P_0)}\oplus 0_{d-\tilde{d}}$, $\tilde{d}=\dim(\Im(P_0))$. Hence $L|_{\Im(P_0)}=J^{\ast}(V^{\ast}LV)J$ with $J^{\ast}:=[I_{\tilde{d}}\quad 0^{\tilde{d}\times (d-\tilde{d})}]\in \RR^{\tilde{d}\times d}$.  Defining $\tilde{P}_{0}:=P_0|_{\Im(P_0)}\succ 0$, gives $\tilde{P}_{0}=B^{\ast}B$, where $\tilde{P}_{0},B\in \RR^{\tilde{d}\times\tilde{d}}$ and $B$ is invertible. So $(B^{-1})^{\ast}L|_{\Im(P_0)}B^{-1}=(B^{-1})^{\ast}B^{\ast}BB^{-1}+ \sum_{i}(B^{-1})^{\ast}P_i|_{\Im(P_0)}B^{-1}x_i=I+\sum_{i}(B^{-1})^{\ast}P_i|_{\Im(P_0)}B^{-1}x_i=:\tilde{L}$. $\tilde{L}$ is in $\RR^{\tilde{d}\times \tilde{d}}$ and $D_{L}(1)=D_{\tilde{L}}(1)$.
With $C^{\ast}:=(B^{-1})^{\ast}J^{\ast}V^{\ast}\in \RR^{\tilde{d}\times \tilde{d}}\RR^{\tilde{d}\times d}\RR^{d\times d}=\RR^{\tilde{d}\times d}$ and $D^{\ast}:=(V^{-1})^{\ast}JB^{\ast}\in \RR^{d\times d}\RR^{d\times \tilde{d}}\RR^{\tilde{d}\times \tilde{d}}=\RR^{d\times \tilde{d}}$, the lemma is proved.
\end{proof}

\begin{theorem} $\label{diagonal}$
Suppose $L_1 \in \SSS\RR^{d\times d}[x]$ is a diagonal linear polynomial and the set $D_{L_1}(1)$ is bounded. Then for every linear polynomial $L_2\in \SSS\RR^{\ell\times \ell}[x]$ with $L_2|_{D_{L_1}(1)} \succ 0$, there are matrix polynomials $A_j\in \RR^{\ell\times \ell}\left[ x\right]$ and $B_k\in \RR^{d\times \ell}\left[x\right]$ satisfying 
$$L_2=\sum_jA_j^{\ast}A_j+\sum_kB_k^{\ast}L_1 B_k.$$
\end{theorem}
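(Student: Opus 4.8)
The plan is to reduce, by two elementary manipulations that use only that $L_1=\mathrm{diag}(f_1,\dots,f_d)$ is diagonal (so the scalar Farkas Lemma, Theorem~\ref{farkas}, may be applied to its entries) and that $D_{L_1}(1)$ is bounded, to the situation already covered by Lemma~\ref{lemma1} together with Theorem~\ref{main}. First I would assume $K:=D_{L_1}(1)\neq\emptyset$ (this is implicit, exactly as in Theorem~\ref{farkas}: for empty $K$ the conclusion already fails for scalar linear polynomials). After an affine change of the $x$-variables — which preserves the diagonal form of $L_1$, the positivity domain up to the same change, and the existence of the sought decomposition — I would assume that $0$ is a relative interior point of $K$ and that $\mathrm{aff}(K)=V:=\{x : x_{k+1}=\dots=x_n=0\}$, writing $x=(y,z)\in\RR^k\times\RR^{n-k}$. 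Split $\{1,\dots,d\}=I_V\sqcup I_F$, where $i\in I_V$ iff $f_i|_V\equiv 0$ (equivalently $f_i$ is a linear form in $z$ alone), and $I_F$ is the rest; note $I_V=\emptyset$ precisely when $K$ is full-dimensional.

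The two reductions are the following. (i) For $i\in I_F$ we have $f_i|_V\ge 0$ on $K$ (on $K$ the $z$-coordinates vanish), so by Theorem~\ref{farkas} there are constants $c^{(i)}_0,c^{(i)}_1,\dots,c^{(i)}_d\ge 0$ with $f_i|_V=c^{(i)}_0+\sum_{j=1}^d c^{(i)}_j f_j$ as a polynomial identity on $\RR^n$; hence $\mathrm{diag}(f_i|_V : i\in I_F)=E_0+\sum_{j=1}^d f_j\,E_j$ with $E_0,E_j$ constant positive semidefinite diagonal matrices, and therefore $B^{\ast}\,\mathrm{diag}(f_i|_V : i\in I_F)\,B$ is, for every matrix polynomial $B$, a sum of Hermitian squares plus a sum of terms $C^{\ast}L_1C$ (write $E_j^{1/2}B$ as a stack of rows and use $f_j\,w w^{\ast}=(e_j w^{\ast})^{\ast}L_1(e_j w^{\ast})$ with $e_j\in\RR^{d\times 1}$). (ii) Boundedness of $K$ and $0\in\mathrm{relint}(K)$ force $\{z : f_i(z)\ge 0\ \forall i\in I_V\}=\{0\}$ — otherwise a nonzero $z^{\ast}$ in that set would give $(0,tz^{\ast})\in K$ for small $t>0$, contradicting $K\subseteq V$ — which is equivalent to $\mathrm{cone}\{\nabla f_i : i\in I_V\}=\RR^{n-k}$, so each coordinate form $\pm z_j$ ($j>k$) is a nonnegative linear combination of $\{f_i : i\in I_V\}$; consequently $B^{\ast}\bigl(\bigoplus_{j>k}\mathrm{diag}(z_j,-z_j)\bigr)B$ is, for every $B$, a sum of terms $C^{\ast}L_1C$. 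It follows that it suffices to produce a decomposition with $L_1$ replaced by
$$\mathcal L:=\mathrm{diag}(f_i|_V : i\in I_F)\ \oplus\ \bigoplus_{j>k}\mathrm{diag}(z_j,-z_j),$$
which is diagonal, satisfies $D_{\mathcal L}(1)=K$, and — crucially — whose $I_F$-block is the restriction $\mathrm{diag}(f_i|_V)$, a matrix polynomial in the variables $y$ only.

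With $\mathcal L$ in hand the proof should close cleanly. Write $L_2=L_2(y,0)+Q(z)$, where $L_2(y,0)=R_0+\sum_{i\le k}R_i x_i$ involves only $y$ and $Q(z)=\sum_{j>k}R_j x_j$ involves only $z$ and vanishes at $z=0$. Since $Q(0)=0\succeq 0$ and $D_{\bigoplus_{j>k}\mathrm{diag}(z_j,-z_j)}(1)=\{0\}$, Proposition~\ref{smallest} applied in the $z$-variables writes $Q$ as $\sum_j A_j^{\ast}A_j+\sum_k B_k^{\ast}\bigl(\bigoplus_{j>k}\mathrm{diag}(z_j,-z_j)\bigr)B_k$, an $\mathcal L$-decomposition. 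For $L_2(y,0)$, note that $D_{\mathrm{diag}(f_i|_V : i\in I_F)}(1)=K$ as a subset of $\RR^k$ is bounded with $0$ in its interior, so Lemma~\ref{lemma1} yields a monic $\tilde L(y)$ with $D_{\tilde L}(1)=K$ and $\tilde L=\hat C^{\ast}\,\mathrm{diag}(f_i|_V : i\in I_F)\,\hat C$; applying Theorem~\ref{main} to $\tilde L$ and $L_2(y,0)$ (which is $\succ 0$ on $D_{\tilde L}(1)$) and substituting $\tilde L=\hat C^{\ast}\,\mathrm{diag}(f_i|_V)\,\hat C$ expresses $L_2(y,0)$ as a sum of Hermitian squares plus terms of the form $(\hat C\hat B_k)^{\ast}\,\mathrm{diag}(f_i|_V : i\in I_F)\,(\hat C\hat B_k)$ — i.e.\ an $\mathcal L$-decomposition, with no contamination, because in $\mathcal L$ that block \emph{is} the restriction. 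Summing the two decompositions gives the decomposition of $L_2$ with respect to $\mathcal L$, and then the reductions (i)--(ii) convert it into one with respect to $L_1$.

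The step I expect to require the most care is the passage to $\mathcal L$. The monic polynomial produced by Lemma~\ref{lemma1} lives only in the $y$-variables and is a congruence of the \emph{restriction} $\mathrm{diag}(f_i|_V)$, not of the block $\mathrm{diag}(f_i : i\in I_F)$ of $L_1$; re-inflating it to $L_1$ directly would introduce spurious $z$-linear terms that must then be reabsorbed, and it is precisely to avoid this bookkeeping that one first replaces $L_1$ by $\mathcal L$. The two facts that must be verified with care are that boundedness together with $0\in\mathrm{relint}(K)$ genuinely pins down the recession structure — so that $\mathrm{cone}\{\nabla f_i : i\in I_V\}$ is all of the transverse space and hence $\pm z_j$ are nonnegative combinations of the $I_V$-forms — and that the Farkas identities for the $I_F$-forms are honest polynomial identities on $\RR^n$ (as Theorem~\ref{farkas} guarantees). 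Everything else is direct sums and rank-one decompositions.
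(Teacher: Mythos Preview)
Your approach is essentially the paper's: after a linear change of coordinates placing $0$ in the relative interior of $K$ and $\mathrm{aff}(K)=\{z=0\}$, you replace $L_1$ by an auxiliary diagonal polynomial $\mathcal L$ (the paper's $\tilde L$), justify the replacement by applying the scalar Farkas lemma entrywise, split $L_2$ along the $y$/$z$ variables, and handle the $y$-part via Lemma~\ref{lemma1} plus Theorem~\ref{main} and the $z$-part via Proposition~\ref{smallest} (the paper uses Proposition~\ref{smallest2} directly). Your explicit partition $I_V\sqcup I_F$ is a clean bookkeeping device that the paper leaves implicit (it simply carries the zero rows along in $\tilde L_{1,1}$), and your reduction~(ii) derives the cone identity from $K\subset V$ rather than invoking Farkas again --- but these are cosmetic differences, not a different strategy.

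There is, however, one genuine gap: your dismissal of the case $K=\emptyset$. You write that ``for empty $K$ the conclusion already fails for scalar linear polynomials,'' but this is not so. It is true that Theorem~\ref{farkas} tacitly needs $K\neq\emptyset$ --- with $f_1\equiv -1$ one cannot write $x=c_0+c_1f_1$ with $c_0,c_1\ge 0$. But Theorem~\ref{diagonal} allows the $A_j,B_k$ to be \emph{polynomials}, not constants, and in that setting the conclusion does hold for empty $K$: the theorem of alternatives gives $-1$ as a nonnegative combination of the diagonal entries of $L_1$, and then the identity $4a=(a+1)^2-(a-1)^2$ places every symmetric element in the quadratic module generated by $-1$ (e.g.\ $x=\bigl(\tfrac{x+1}{2}\bigr)^2+(-1)\bigl(\tfrac{x-1}{2}\bigr)^2$). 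This is exactly how the paper disposes of the empty case; you should add it and delete the parenthetical claim.
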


\begin{proof}
If $D_{L_1}(1)=\emptyset$, then by Theorem \ref{farkas}, $-1$ is in the convex cone generated by the diagonal entries of $L_1$. Now $L_2$ can be expressed in the desired form, since the quadratic module generated by $-1$ in any ring with involution consists of all symmetric elements by the identity $4a=(a+1)^2-(a-1)^2$.
If $D_{L_1}(1)=\left\{\vec{a}\right\}$, then we can use Proposition \ref{smallest} and we are done. If $\dim D_{L_1}(1)=n$, then by Lemma $1$ WLOG $L_1$ is monic and Theorem \ref{main} is used. Otherwise we have $1\leq \dim D_{L_1}(1)=:k \leq n-1$. Since $D_{L_1}(1)$ is convex, it lies in some affine subspace of dimension $k$. With translation WLOG $\vec{0}\in D_{L_1}(1)$ and hence the affine subspace is actually a vector subspace of dimension $k$. Let $B=\left\{e_1^{\prime},e_2^{\prime},\ldots,e_k^{\prime}\right\}$ be the basis of this subspace. $B$ can be completed to the basis of $\RR^{n}$, i.e., $B^{\prime}=\left\{e_1^{\prime}, \ldots, e_k^{\prime}, e_{k+1}^{\prime}, \ldots, e_{n}^{\prime}\right\}$. Standard basis $\left\{e_1, e_2, \ldots, e_{n}\right\}$ of $\RR^{n}$ can be uniquelly expressed by $B^{\prime}$ and vice versa, i.e., $e_i=\sum_{j=1}^{n}\alpha^{(i)}_je_j^{\prime}$ and $e_i^{\prime}=\sum_{j=1}^{n}\beta^{(i)}_je_j$, for unique $\alpha_j^{(i)}, \beta_j^{(i)}\in\RR$. Therefore introducing new unknows $x_i^{\prime}$ as $x_i^{\prime}=\sum_{j=1}^{n}\beta^{(i)}_jx_j$ gives also $x_i=\sum_{j=1}^{n}\alpha^{(i)}_jx_j^{\prime}$. Putting expressed $x_i$-s into $L_1(x_1,\ldots,x_n)$, we get $\tilde{L}_1(x_1^{\prime},\ldots,x_n^{\prime})$. The map $\Phi:\RR^n\rightarrow \RR^n$, defined by $\Phi:(a_1,\ldots,a_n)\mapsto (\sum_{j=1}^{n}\beta^{(1)}_ja_j,\ldots,\sum_{j=1}^{n}\beta^{(n)}_ja_j)$, is bijective and  $L_1((a_1,\ldots,a_n))=\tilde{L}_1(\Phi(a_1,\ldots,a_n))$.
Hence $\Phi(D_{L_1}(1))=D_{\tilde{L}_1}(1)$. So $D_{L_1}(1)$ and $D_{\tilde{L}_1}(1)$ are in bijective correspondence. Similarly for $D_{L_2}(1)$ and $D_{\tilde{L}_2}(1)$. Therefore $D_{L_1}(1)\subseteq D_{L_2}(1) \Leftrightarrow D_{\tilde{L}_1}(1) \subseteq D_{\tilde{L}_2}(1)$. From the construction of basis $B^{\prime}$, $x^{\prime}\in D_{\tilde{L}_1}(1)$ is of the form $(x_1^{\prime},\ldots,x_k^{\prime}, 0,\ldots,0)$.

Let us write $$\tilde{L}_1=\left(P^{\prime}_0+\sum_{i=1}^{k}P^{\prime}_i x^{\prime}_i\right) + \sum_{i=k+1}^{n}P^{\prime}_{i} x^{\prime}_{i}= \tilde{L}_{1,1}(x_1^{\prime},\ldots,x_k^{\prime})+\tilde{L}_{1,2}(x_{k+1}^{\prime},\ldots,x_n^{\prime}).$$ 
We notice, that $\tilde{L}_1$ is still diagonal. For
$$\tilde{L}=\tilde{L}_{1,1}(x_1^{\prime},\ldots,x_k^{\prime})\oplus 
\left[
\begin{array}{cc}
x_{k+1}^{\prime}&0\\
0&-x_{k+1}^{\prime}\\
\end{array}
\right]
\oplus
\cdots
\oplus
\left[
\begin{array}{cc}
x_{n}^{\prime}&0\\
0&-x_{n}^{\prime}\\
\end{array}
\right]
$$ (which is obviously diagonal),
$D_{\tilde{L}_1}(1)=D_{\tilde{L}}(1)$, and with the use of Theorem $\ref{farkas}$ on diagonal entries of $\tilde{L}_1$ and each diagonal entry of $\tilde{L}$, $\tilde{L}$ can be expressed as $\sum_j A^{\ast}_jA_j+ \sum_k B^{\ast}_k \tilde{L}_1 B_k$. Hence it suffices to prove the statement of the theorem for the pair $\tilde{L}, \tilde{L}_2$.

Analogously as for $\tilde{L}_1$ we write $\tilde{L}_2$ as $\tilde{L}_2=\left(R^{\prime}_0+\sum_{i=1}^{k}R^{\prime}_i x^{\prime}_i\right) + \sum_{i=k+1}^{n}R^{\prime}_{i} x^{\prime}_{i}= \tilde{L}_{2,1}(x_1^{\prime},\ldots,x_k^{\prime})+\tilde{L}_{2,2}(x_{k+1}^{\prime},\ldots,x_n^{\prime})$. 
We have $\tilde{L}_{2,1}|_{D_{\tilde{L}_{1,1}}(1)}\succ 0$. Since there exists an interior point in $D_{\tilde{L}_{1,1}}(1)$, Lemma $\ref{lemma1}$ allows us to regard $\tilde{L}_{1,1}$ as monic. Finally Theorem $\ref{main}$ is used for the pair $\tilde{L}_{1,1},\tilde{L}_{2,1}$.

It remains to express $\tilde{L}_{2,2}(x_{k+1}^{\prime},\ldots,x_n^{\prime})=R^{\prime}_{k+1} x^{\prime}_{k+1}+\cdots+R^{\prime}_n x^{\prime}_n$ with $\tilde{L}.$
According to Proposition $\ref{smallest2}$, $R_i x^{\prime}_i$ can be expressed with
$\left[ 
\begin{array}{cc}
x^{\prime}_i&0\\
0&-x^{\prime}_i
\end{array}
\right].$
Hence also with $\tilde{L}$. Since $i$ was arbitrary, we are done.

To conclude, we got the expression 
$$\tilde{L}_2(x_1^{\prime},\ldots,x_{n}^{\prime})=\sum_j \tilde{A}_j^{\ast}\tilde{A}_j+\sum_k \tilde{B}_k^{\ast}\tilde{L}_1(x_1^{\prime},\ldots,x_{n}^{\prime}) \tilde{B}_k.$$
Using $x_i^{\prime}=\sum_{j=1}^{n}\beta^{(i)}_jx_j$, we finally get
$$L_2(x_1,\ldots,x_n)=\sum_j A_j^{\ast}A_j+\sum_k B_k^{\ast}L_1(x_1,\ldots,x_n) B_k.$$
\end{proof}

\section{Strict positivity of $L_2|_{D_L(1)}$}

The next thing to be studied is the necessity of positive definiteness in Theorem \ref{main}, i.e., whether semidefiniteness suffices.  We separately study the one-variable case from the general diagonal case.

\subsection{One-variable case}

\begin{theorem}$\label{one-variable}$
Suppose $L_1(x)=P_0+P_1 x\in \SSS\RR^{d\times d}[x]$ is a linear polynomial and the set $D_{L_1}(1)$ has an interior point. Then
for every linear polynomial $L_2(x)=R_0+R_1 x\in \SSS\RR^{\ell\times \ell}[x]$ with $L_2|_{D_{L_1}(1)} \succeq 0$, there are matrix polynomials $A_j\in \RR^{\ell\times \ell}\left[ x\right]$ and $B_k\in \RR^{d\times \ell}\left[x\right]$ satisfying 
$$L_2=\sum_jA_j^{\ast}A_j+\sum_kB_k^{\ast}L_1 B_k.$$
\end{theorem}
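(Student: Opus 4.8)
The plan is to reduce, by translating the variable and invoking Lemma~\ref{lemma1}, to the case where $L_1$ is monic, and then to exploit that in one variable $D_{L_1}(1)$ is an interval and that a matrix polynomial of degree $\le 1$ agrees on an interval with the linear interpolant of its two endpoint values.

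In detail: after a translation $x\mapsto x+x_0$ we may assume $0$ is an interior point of $D_{L_1}(1)$ (the representation transfers back by the reverse substitution). Lemma~\ref{lemma1} then gives a monic $\tilde L=I+\tilde P_1 x\in\SSS\RR^{\tilde d\times\tilde d}[x]$ with $D_{\tilde L}(1)=D_{L_1}(1)$ and $\tilde L=C^{\ast}L_1C$; since $B^{\ast}\tilde L B=(CB)^{\ast}L_1(CB)$, it suffices to represent $L_2$ over $\tilde L$, so we may and do assume $L_1=I+P_1x$ with $P_1\in\SSS\RR^{d\times d}$. Being an affine preimage of the semidefinite cone, $D_{L_1}(1)$ is a closed interval $[a,b]$ with $-\infty\le a<0<b\le+\infty$. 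If $a=-\infty$ and $b=+\infty$, then $L_2\succeq 0$ on all of $\RR$ forces $R_1=0$, and $L_2=R_0=A^{\ast}A$ for $A=R_0^{1/2}$; so assume from now on that at least one endpoint, say $a$, is finite (the case ``$b$ finite'' being symmetric).

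Because $a=\inf D_{L_1}(1)$, the matrix $L_1(a)$ is positive semidefinite but singular --- were it positive definite, $L_1$ would remain positive definite slightly to the left of $a$. Choose $0\ne v\in\ker L_1(a)$; since $L_1$ is monic, $L_1(a)v=0$ gives $P_1v=-v/a$, hence $v^{\ast}L_1(x)v=|v|^2(1-x/a)=\frac{|v|^2}{|a|}(x-a)$, and after rescaling $v$ we obtain a constant column $w\in\RR^{d\times 1}$ with $x-a=w^{\ast}L_1(x)w$. Symmetrically, if $b$ is finite, a vector in $\ker L_1(b)$ produces a constant $w'\in\RR^{d\times 1}$ with $b-x=w'^{\ast}L_1(x)w'$. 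Now use $\deg L_2\le 1$: if both $a,b$ are finite, then identically $L_2(x)=\frac{b-x}{b-a}L_2(a)+\frac{x-a}{b-a}L_2(b)$, with $L_2(a),L_2(b)\succeq 0$ since $a,b\in D_{L_1}(1)$; factoring $\frac{1}{b-a}L_2(a)=\sum_m u_m^{\ast}u_m$ and $\frac{1}{b-a}L_2(b)=\sum_m u_m'^{\ast}u_m'$ with $u_m,u_m'\in\RR^{1\times\ell}$ and inserting the scalar identities above yields $L_2=\sum_m(w'u_m)^{\ast}L_1(w'u_m)+\sum_m(wu_m')^{\ast}L_1(wu_m')$ --- constant $B_k$'s, no $A_j$ needed. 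If instead $a$ is finite and $b=+\infty$, then $R_1=\lim_{x\to+\infty}\frac1x L_2(x)\succeq 0$ and $L_2(x)=L_2(a)+R_1(x-a)$; writing $L_2(a)=A^{\ast}A$ and $R_1=\sum_m u_m^{\ast}u_m$ gives $L_2=A^{\ast}A+\sum_m(wu_m)^{\ast}L_1(wu_m)$. In all cases, undoing the congruence of Lemma~\ref{lemma1} and the translation produces the required representation.

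The only points carrying real content are the reduction to a monic $L_1$ (Lemma~\ref{lemma1}, which is what lets us solve for $P_1v$ from $L_1(a)v=0$) and the observation that $\ker L_1(a)\ne 0$ at a finite endpoint; everything else is factorization of positive semidefinite matrices and linear interpolation, and a byproduct of the argument is that $A_j$ and $B_k$ may always be taken constant here.
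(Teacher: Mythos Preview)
Your argument is correct and, compared with the paper's own proof, considerably more elementary. The paper splits into two cases: for bounded $D_{L_1}(1)$ it passes to noncommutative evaluations, shows $D_{L_1}(1)\subseteq D_{L_2}(1)\Rightarrow D_{L_1}\subseteq D_{L_2}$ by orthogonally diagonalising the symmetric argument $X$, and then invokes the LP-satz of Helton--Klep--McCullough; for unbounded $D_{L_1}(1)$ it simultaneously diagonalises the (semidefinite) coefficients via Newcomb's theorem and reduces to the scalar Farkas lemma. You bypass all of this machinery by the single observation that at a finite endpoint $a$ of $D_{L_1}(1)$ the matrix $L_1(a)$ is singular, and a kernel vector (thanks to monicity, secured by Lemma~\ref{lemma1}) furnishes a constant column $w$ with $w^{\ast}L_1(x)w=x-a$; the rest is just linear interpolation of the degree-one matrix polynomial $L_2$ between the endpoint values, together with factorisation of positive semidefinite matrices. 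The gain is a self-contained argument that avoids both the operator-algebraic LP-satz and Newcomb's simultaneous diagonalisation, while still yielding constant $A_j$ and $B_k$; the paper's route, on the other hand, makes explicit the connection with the free positivity theory of \cite{main} that motivates the question.
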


\begin{proof}
\textbf{Case 1:} If $D_{L_1}(1)$ is bounded and there exists an interior point in $D_{L_1}(1)$, then according to Lemma $\ref{lemma1}$, we can assume $L_1$ and $L_2$ are monic, i.e., $P_0=I, R_0=I$. Since we have just one variable, we may interpret both $L_1$ and $L_2$ as NC polynomials, or precisely linear pencils. We will first show, that\\
$\underline{D_{L_1}(1)\subseteq D_{L_2}(1)\Rightarrow D_{L_1}\subseteq D_{L_2}:}$ Let us take $X\in D_{L_1}$, $X\in\SSS\RR^{m\times m}$, which means $L_1(X)=I\otimes I_m+P_1\otimes X\succeq 0$. Or equivallently $I_m\otimes I+X\otimes P_1\succeq 0$. We have to show that $X\in D_{L_2}$. Since $X$ is symmetric, it can be real ortogonally diagonalized, i.e., $UXU^{T}=D$, where $U$ is an orthogonal matrix of size $m$. After multiplying with invertible matrix
$U\otimes I$ we get $(U\otimes I)(I_m\otimes I+X\otimes P_1)(U\otimes I)^T=I_m\otimes I+D \otimes P_1 \succeq 0$. Hence $X\in D_{L_1}\Leftrightarrow D\in D_{L_1}$. Now $I_m\otimes I+D\otimes P_1\succeq 0$ is a block-diagonal matrix with the blocks of the form $I+d_i P_1$. It follows $I_m\otimes I+D\otimes P_1 \succeq 0 \Leftrightarrow I+d_i P_1\succeq 0$ for $i=1,2,\ldots,m$ $\Leftrightarrow d_i\in D_{L_1}(1)$ for $i=1,2,\ldots,m$. But according to the assumption $d_i\in D_{L_2}(1)$ for all $i=1,2,\ldots,m$ and hence $X \in D_{L_2}$.

To be able to use LP-satz (i.e., Theorem \ref{main} with $L_2|_{D_{L_1}(1)}\succ 0$ replaced by $L_2|_{D_{L_1}}\succeq 0$ and $A_j, B_k$ constant); cf. [$2$, Corollary $3.7$]; for the pair $L_1, L_2$, $D_{L_1}$ must be bounded. But by [$2$, Proposition $2.4$] this is equivalent to $D_{L_1}(1)$ being bounded. So by LP-satz there exist $B_k$, such that $L_2=\sum_k B_k^{\ast}L_1 B_k$.\\
\textbf{Case 2:}
If $D_{L_1}(1)$ is unbounded, then it is an interval of the form $[a,\infty)$, $(-\infty,a]$, $(-\infty,\infty)$, $a\in \RR$. With translation we may assume $a=0$.

First we study the case $D_{L_1}(1)=[0,\infty)$. Since $0\in D_{L_1}(1)$, we have $P_0\succeq 0$. We can also show, that $P_1\succeq 0$. To explain: $u^{\ast}L_1(x)u= u^{\ast}P_0 u+ u^{\ast}(P_1x)u = u^{\ast}P_0 u+x u^{\ast}P_1 u$. In the case that $u^{\ast}P_1 u\neq 0$, we have $x\left|u^{\ast}P_1 u\right|>\left|u^{\ast}P_0 u\right|$ for $x$ great enough. Therefore, if there exists $u$, such that $u^{\ast}P_1u<0$, then $\displaystyle\lim_{x\to \infty}x\notin D_{L_1}(1)$. Contradiction.

Since $P_0$ and $P_1$ are positive semidefinite, we can use Newcomb's theorem [$5$, Theorem $20.2.2$] (It is actually made for complex matrices but with a slight modification of the proof it holds for real as well.) to simultaneously diagonalize them with invertible $S$, i.e., $S^{\ast}P_0S$, $S^{\ast}P_1S$ are both diagonal. So WLOG $L_1$ is diagonal. Analogously for $L_2$. Now we just use Theorem $1$ on diagonal entries of $L_1$ and each diagonal entry of $L_2$ and we are done.

In the case $D_{L_1}(1)=(-\infty,0]$, we have again $P_0\succeq 0$. As above we show $P_1\preceq 0$.
Since $P_0,$ $P_1$ are semidefinite, Newcomb's theorem [$5$, Theorem $20.2.2$] can be used and we proceed as above.
 
In the case $D_{L_1}(1)=(-\infty,\infty)$, we have $P_0 \succeq 0$ and it is easy to show, that $P_1=0$. Therefore $L_1(x)=P_0$ and analogously $L_2(x)=R_0$, where $R_0\succeq 0$. Hence $L_2(x)=C^{\ast}C$.
\end{proof}

The following example shows, that $D_{L_1}(1)$ must have an interior point in Theorem $\ref{one-variable}$.

\begin{example}
For the non-monic, non-diagonal polynomial 
$ L_1(x)=\tiny {[\begin{array}{cc} 1  &   x\\ x  &   0 \end{array}]},$
we have that the set $D_{L_{1}}(1)=\left\{0\right\}$. Therefore, $D_{L_1}(1)$ is non-empty and bounded. It also holds that the polynomial $L_2(x)=x$ is non-negative on $D_{L_{1}}(1)$, but there do not exist matrix polynomials $A_j\in \RR[x], B_k\in \RR^{2\times 1}[x]$, such that $L_2=\sum_j A_j^{\ast}A_j+\sum_k B_k^{\ast}L_1 B_k$.
\end{example}

\begin{proof}
Since $\det(L_1)=-x^2$, $D_{L_{1}}(1)=\left\{0\right\}$. It is obvious, that $L_2|_{D_{L_{1}}(1)}\geq 0$. The proof will be by contradiction. Let us say there exist $A_j$, $B_k$, such that $\sum_j A_j^{\ast}A_j+\sum_k B_k^{\ast}L_1 B_k=x$.
Let $B_k$ be of the form $[b^{(k)}_1,b^{(k)}_2]^T$, where $b^{(k)}_i\in \RR[x]$ and $A_j\in \RR[x]$. Comparing the expression $\sum_j A_j^{\ast}A_j+\sum_k B_k^{\ast}L_1 B_k$ with $x$:
$$\sum_j A_j^2 + \sum_k \left((b^{(k)}_1)^2+2(b_1^{(k)})(b_2^{(k)})x\right)\overbrace{=}^{?}x.$$
The coefficient at $1$ on LHS equals $\sum_j A_{j,0}^2+\sum_k (b^{(k)}_{1,0})^2$, where $A_{j,0}$ denotes the free monomial in $A_j$ and $b^{(k)}_{1,0}$ the free monomial in $b^{(k)}_1$. Since on RHS it is $0$, $A_{j,0}=b^{(k)}_{1,0}=0$ for all $j,k.$ But then the coefficient at $x$ on LHS is $0$, while on RHS $1.$ Contradiction.
\end{proof}

\subsection{General diagonal case}

The aim of this subsection is to prove that for very special diagonal $L_1$, we can replace the condition $L_2|_{D_{L_1}(1)} \succ 0$ in Theorem \ref{main} with the weaker condition $L_2|_{D_{L_1}(1)} \succeq 0$. More precisely, we have the following theorem.

\begin{theorem}$\label{simplex}$
Suppose the polynomial $L_1=P_0+\sum_{i=1}^nP_i x_i \in \SSS\RR^{d\times d}[x]$ is diagonal and the set $D_{L_1}(1)$ is an $n$-simplex. If the polynomial $L_2=R_0+\sum_{i=1}^nR_i x_i \in \SSS\RR^{\ell\times \ell}[x]$, satistfies the condition $L_2|_{D_{L_1}(1)}\succeq 0$, then there exist matrix polynomials $A_j\in \RR^{\ell\times \ell}, B_k\in \RR^{d\times \ell}$, such that the following is true:
$$L_2=\sum_j A^{\ast}_jA_j+ \sum_k B^{\ast}_k L_1 B_k.$$
\end{theorem}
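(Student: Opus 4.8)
The plan is to exploit the very rigid geometry of an $n$-simplex to reduce to the diagonal building blocks already handled. Since $D_{L_1}(1)$ is an $n$-simplex, it is in particular bounded with non-empty interior, and it is cut out by exactly $n+1$ affine inequalities. After the linear change of variables used in the proof of Theorem \ref{diagonal} (which transports everything covariantly and preserves the diagonal form), I would put the simplex into standard position, say the standard simplex $\{x_i \geq 0,\ \sum_i x_i \leq 1\}$. The diagonal entries of $L_1$ are affine functions non-negative on this simplex, so by Theorem \ref{farkas} each is a non-negative combination of the facet functions $x_1,\dots,x_n,\,1-\sum x_i$ (and the constant $1$); running Theorem \ref{farkas} in the other direction too, one shows that $L_1$ and the canonical diagonal pencil
$$
\tilde L := (x_1)\oplus\cdots\oplus(x_n)\oplus\Big(1-\sum_{i=1}^n x_i\Big)
$$
generate the same quadratic module with constant coefficients, exactly as in the $\tilde L$-replacement steps of Theorem \ref{diagonal}. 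So it suffices to handle the pair $\tilde L, L_2$.

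Now the key point: for the standard simplex the facet functions are an affine basis in a strong sense — write $y_i := x_i$ for $i=1,\dots,n$ and $y_{n+1} := 1-\sum_i x_i$, so that $\sum_{i=1}^{n+1} y_i = 1$ and the $y_i$ are the barycentric coordinates. Rewriting $L_2 = R_0 + \sum_i R_i x_i$ in barycentric coordinates gives $L_2 = \sum_{i=1}^{n+1} S_i\, y_i$ for symmetric matrices $S_i$ (here $S_i = R_0 + R_i$ for $i\le n$ and $S_{n+1}=R_0$), using $1 = \sum y_i$ to absorb the constant term. The condition $L_2|_{D_{L_1}(1)} \succeq 0$ evaluated at the vertices of the simplex — where exactly one barycentric coordinate equals $1$ and the rest vanish — forces $S_i \succeq 0$ for every $i=1,\dots,n+1$. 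Hence $S_i = C_i^\ast C_i$ for some $C_i$, and
$$
L_2 = \sum_{i=1}^{n+1} C_i^\ast C_i\, y_i = \sum_{i=1}^{n+1} (C_i \otimes \text{—})^\ast \big(y_i\big) (C_i\otimes\text{—}),
$$
i.e. each term $S_i y_i$ is visibly of the form $B_i^\ast \tilde L B_i$ because $y_i$ is literally one of the diagonal entries of $\tilde L$. Summing over $i$ expresses $L_2$ in the desired form with constant $B_i$ and no $A_j$ terms at all. Undoing the coordinate change (as at the end of Theorem \ref{diagonal}) and re-incorporating the passage $\tilde L \rightsquigarrow L_1$ finishes the argument.

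The step I expect to be the only real subtlety is the claim that evaluation at the $n+1$ vertices is \emph{enough} to conclude $S_i \succeq 0$ — this is where it matters that $D_{L_1}(1)$ is genuinely an $n$-simplex (so that it has exactly $n+1$ vertices, which are affinely independent, and the barycentric coordinates are globally non-negative affine functions on it) rather than some other bounded polytope; for a general polytope one would only get that $L_2$ is a non-negative combination of vertex-values via convexity, but the vertices would not be "orthogonal" in barycentric coordinates and the clean identity $L_2 = \sum S_i y_i$ with $S_i\succeq0$ would fail. Everything else — the two applications of Theorem \ref{farkas} to pass between $L_1$ and $\tilde L$, the change of variables, the factorization $S_i = C_i^\ast C_i$ — is routine and already appears in the proofs of Proposition \ref{smallest} and Theorem \ref{diagonal}. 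One should also double-check the edge degenerate possibility that is excluded by hypothesis (an $n$-simplex is full-dimensional), so the cases $D_{L_1}(1)=\emptyset$ or lower-dimensional do not arise here.
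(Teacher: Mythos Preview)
Your argument is correct and is genuinely different from the paper's proof. The paper reduces (as you do) to a canonical $(n+1)\times(n+1)$ diagonal pencil $L$ whose entries are the facet functions of the simplex, but then proceeds operator-algebraically: after making $L$ monic it observes that $\{I,\hat P_1,\dots,\hat P_n\}$ spans the full algebra of diagonal $(n+1)\times(n+1)$ matrices, defines the unital linear map $\tau$ sending $\hat P_i\mapsto \hat R_i$, invokes \cite[Theorem~3.5]{main} to see that $\tau$ is positive, then uses Stinespring's theorem (positive maps on commutative $C^\ast$-algebras are completely positive) to upgrade to complete positivity, and finally applies the LP-satz \cite[Corollary~3.7]{main} to obtain the constant-coefficient certificate. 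Your route bypasses all of this machinery: the barycentric identity $L_2=\sum_{i=1}^{n+1}S_i\,y_i$ together with evaluation at the $n+1$ vertices gives $S_i\succeq 0$ directly, and the certificate is then written down by hand. In effect your argument is an explicit, coordinate-level unpacking of what the complete-positivity step encodes abstractly; it is shorter and self-contained, while the paper's approach makes clearer why the simplex hypothesis is exactly the condition that the coefficient span of $L_1$ be a (commutative) algebra, which is the form in which the result admits a natural restatement (see the Remark following the theorem).

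Two small points worth tightening in a final write-up. First, ``$S_i y_i$ is visibly of the form $B_i^\ast\tilde L B_i$'' needs one line of justification: with $S_i=\sum_m \lambda_{i,m}u_{i,m}u_{i,m}^\ast$ and $B_{i,m}:=\sqrt{\lambda_{i,m}}\,e_i u_{i,m}^\ast\in\RR^{(n+1)\times\ell}$ one has $\sum_m B_{i,m}^\ast\tilde L B_{i,m}=y_iS_i$, so several $B$'s per $i$ are used. Second, only one direction of the Farkas replacement is needed (express $\tilde L$ in terms of $L_1$, not conversely), and the constant term $c_0$ that Theorem~\ref{farkas} may produce is what generates the $\sum_j A_j^\ast A_j$ part of the final expression after composing.
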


In the proof, we will need the following well known result.

\begin{theorem}[1, Theorem 4] \label{extension}
Let $A$ be commutative $C^{\ast}$-algebra and $\tau:A\rightarrow S$ a positive linear function, where $S$ is a vector subspace of the algebra of all bounded operators on some Hilbert space. Then $\tau$ is completely positive.
\end{theorem}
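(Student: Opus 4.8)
The plan is to verify complete positivity directly from the definition: writing $M_n(A)$ (resp.\ $M_n := \CC^{n\times n}$) for the $n\times n$ matrices over $A$ (resp.\ over $\CC$), one must show that for every $n$ the amplification $\tau_n := \tau\otimes\mathrm{id}_{M_n}\colon M_n(A)\to M_n(B(H)) = B(H\otimes\CC^n)$ sends positive elements to positive operators. Two preliminaries come first. (i) A positive linear map from a $C^\ast$-algebra to $B(H)$ is automatically norm-bounded, hence $\tau$ and every $\tau_n$ are norm-continuous; since the cone of positive operators is norm-closed, it suffices to produce, for each $F\in M_n(A)$ with $F\succeq 0$ and each $\varepsilon>0$, an element $F_\varepsilon\succeq 0$ with $\|F-F_\varepsilon\|\le\varepsilon$ and $\tau_n(F_\varepsilon)\succeq 0$. (ii) By the Gelfand transform $A\cong C_0(X)$ for a locally compact Hausdorff $X$; I would treat the unital case $A=C(X)$ with $X$ compact in detail — this is the case relevant to Theorem~\ref{simplex}, where $X$ is a simplex — and indicate the general reduction at the end. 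Under the identification $M_n(C(X))\cong C(X,M_n)$, positive elements are exactly the continuous functions $F\colon X\to M_n$ with $F(x)\succeq 0$ for all $x$.

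The heart of the proof is to approximate such an $F$ uniformly by operator-valued step functions on which the positivity of $\tau_n$ is transparent. Fix $\varepsilon>0$. Using compactness of $X$ and uniform continuity of $F$, choose a finite open cover $\{U_\alpha\}$ of $X$ with $\|F(x)-F(y)\|<\varepsilon$ whenever $x,y$ lie in a common $U_\alpha$, pick points $x_\alpha\in U_\alpha$, and take a partition of unity $\{g_\alpha\}\subset C(X)$ subordinate to $\{U_\alpha\}$ (so $g_\alpha\ge 0$ and $\sum_\alpha g_\alpha=1$). Set $F_\varepsilon := \sum_\alpha g_\alpha\otimes F(x_\alpha)\in M_n(A)$, i.e.\ $F_\varepsilon(x)=\sum_\alpha g_\alpha(x)F(x_\alpha)$. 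For every $x$ we have $\bigl\|F(x)-F_\varepsilon(x)\bigr\| = \bigl\|\sum_\alpha g_\alpha(x)\bigl(F(x)-F(x_\alpha)\bigr)\bigr\|\le\sum_\alpha g_\alpha(x)\,\varepsilon=\varepsilon$, so $\|F-F_\varepsilon\|\le\varepsilon$, and $F_\varepsilon\succeq 0$ since each $F(x_\alpha)\succeq 0$ and $g_\alpha\ge 0$. Now $\tau_n(F_\varepsilon)=\sum_\alpha \tau(g_\alpha)\otimes F(x_\alpha)$; since $g_\alpha\ge 0$ forces $\tau(g_\alpha)\succeq 0$ in $B(H)$ and $F(x_\alpha)\succeq 0$ in $M_n$, each summand $\tau(g_\alpha)\otimes F(x_\alpha)$ is a positive operator on $H\otimes\CC^n$, hence so is the sum. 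Letting $\varepsilon\to 0$ and invoking (i) gives $\tau_n(F)\succeq 0$. As $n$ and $F\succeq 0$ were arbitrary, $\tau$ is completely positive.

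For the non-unital case I would pass to the unitization: since $\tau$ is bounded one extends it to a positive linear map $\widetilde\tau\colon\widetilde A\to B(H)$ by sending the adjoined unit to a large enough positive multiple of $I_H$, the positivity of the extension being a short check based on the fact that a positive element of $\widetilde A$ differs from a nonnegative scalar by a self-adjoint element of $A$ of controlled norm. The unital case then applies to $\widetilde\tau$ and restricts back to $\tau$. Alternatively one repeats the approximation verbatim with $C_c(X)$-partitions of unity, using that $F$ vanishes at infinity to control the error outside a large compact set.

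I do not anticipate a genuine obstacle — this is a classical theorem of Stinespring — and the only step requiring care is the uniform approximation: one must ensure the approximant is a \emph{finite} nonnegative combination of constant positive matrices with coefficients drawn from the positive part of $A$, so that $\tau_n$ preserves positivity summand by summand, and that this positivity persists under the norm limit. Everything else is bookkeeping with the Gelfand picture and the elementary order properties of tensor products of positive operators.
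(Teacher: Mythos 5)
Your proof is correct, but note that the paper itself gives no proof of Theorem \ref{extension}: it is quoted as Stinespring's classical result [1, Theorem 4] and used as a black box, so there is no in-paper argument to compare against. Your partition-of-unity scheme --- approximating a positive $F\in M_n(C(X))\cong C(X,M_n)$ uniformly by finite sums $\sum_\alpha g_\alpha\otimes F(x_\alpha)$ with $g_\alpha\ge 0$, $F(x_\alpha)\succeq 0$, applying $\tau_n$ summand by summand, and closing up via boundedness of $\tau$ and norm-closedness of the positive cone --- is the standard proof of this theorem and is sound, as is your sketch of the unitization step. One small inaccuracy in an aside: in the application to Theorem \ref{simplex} the algebra $\hat{S}^{\CC}_1$ is the full diagonal algebra of $(n+1)\times(n+1)$ matrices, so the relevant Gelfand spectrum $X$ is a discrete set of $n+1$ points (not a simplex); in that case your approximation argument degenerates to an exact finite decomposition and the whole theorem is elementary linear algebra.
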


\begin{proof}[Proof of Theorem \ref{simplex}]
An $n$-simplex in $\RR^n$ is an intersection of $n+1$ halfspaces. Therefore, it can be defined as $D_{L}(1)$ of $L=\bigoplus_{i=1}^{n+1}\left(a_0^{(i)}+\sum_{j=1}^{n}a_{j}^{(i)}x_j\right)=\tilde{P}_0+\sum_{i=1}^n\tilde{P}_i x_i \in \SSS\RR^{(n+1)\times (n+1)}[x]$, for appropriate $a_j^{(i)}\in\RR$. By Theorem $1$,
$L=\sum_k A_k^{\ast}L_1A_k$. Hence, it suffices to prove the statement for the pair $L, L_2$.

There is an interior point $v=(v_1,v_2,\ldots,v_n)\in\RR^{n}$ in the $n$-simplex $D_{L_1}(1)$. With substitutions $x_i=\tilde{x}_i+v_i$, the interior point $v$ of $D_{L_1}(1)$ becomes the interior point $0$ for $\tilde{L}=\tilde{P}_0+\sum_{i=1}^n \tilde{P}_i \tilde{x}_1$. $\tilde{L}$ is also diagonal, $\tilde{P}_0=L_1(v)$ and $D_{\tilde{L}}(1)=D_{L}(1)-v$. The same is true for $L_2$ and $\tilde{L}_2$. Since $D_{L}(1)\subseteq D_{L_2}(1) \Leftrightarrow D_{L}(1)-v\subseteq D_{L_2}(1)-v \Leftrightarrow D_{\tilde{L}}(1)\subseteq D_{\tilde{L}_2}(1),$ we have $\tilde{L}_2|_{D_{\tilde{L}}(1)}\succeq 0$. Since $\tilde{P}_0=L_1(v)$, $\tilde{P}_0$ is invertible and we have
$\left((P_{0})^{-\frac{1}{2}}\right)^{\ast}\tilde{L}_1(P_{0})^{-\frac{1}{2}}=$$
\left((P_{0})^{-\frac{1}{2}}\right)^{\ast}\tilde{P}_0(P_{0})^{-\frac{1}{2}}+
\sum_i \left((P_{0})^{-\frac{1}{2}}\right)^{\ast}(\tilde{P}_i \tilde{x}_i)(P_{0})^{-\frac{1}{2}}=$
$I+\sum_i \hat{P}_i \tilde{x}_i=:\hat{L}(\tilde{x})$,
where $D_{\tilde{L}}(1)= D_{\hat{L}}(1)$. Since $0$ is an interior point in $D_{\tilde{L}_2}(1)$, by Lemma $\ref{lemma1}$ there exists monic $\hat{L}_2$, such that $D_{\tilde{L}_2}(1)=D_{\hat{L}_2}(1)$ and $\hat{L}_2=C^{\ast}\tilde{L}_2C$, $\tilde{L}_2=D^{\ast}\hat{L}_2 D$. Therefore, it suffices to prove the statement for $\hat{L}_1,\hat{L}_2$.

Now we define vector spaces $\hat{S}_1:=\Lin\left\{I,\hat{P}_1,\ldots,\hat{P}_n\right\}$ and $\hat{S}_2:=\Lin\left\{I,\hat{R}_1,\right.$ $\left. \ldots,\hat{R}_n\right\}$. Since $D_{\hat{L}}(1)$ is bounded, $\left\{I,\hat{P}_1,\ldots, \hat{P}_n\right\}$ is lineary independent by [$2$, Proposition $2.6$] in $\DD\RR^{(n+1)^2}=\left\{\mathrm{diagonal}\;\mathrm{matrices}\;\mathrm{of}\;\mathrm{size}\;n+1\right\}$. Hence, it is also its basis, which implies $\hat{S}_1$ is algebra. Therefore, $\tau: \hat{S}_1 \rightarrow \hat{S}_2$, where $I\mapsto I$ and $\hat{P}_i\mapsto \hat{R}_i$, is a well-defined unital linear map. 
By [$2$, Theorem $3.5$], it is also positive.\\
Since all matrices in $\left\{I,\hat{P}_1,\ldots,\hat{P}_n\right\}$ are diagonal, $\hat{S}_1$ is commutative algebra. Let $\hat{S}^{\CC}_1$ be complex linear span of $\left\{I,\hat{P}_1,\ldots,\hat{P}_n\right\}$. Similarly for $\hat{S}_2, \hat{S}^{\CC}_2$. Now we extend $\tau$ to $\tau^{\CC}:\hat{S}^{\CC}_1\rightarrow \hat{S}^{\CC}_2$, where $\tau^{\CC}(I/\hat{P}_i)=I/\hat{R}_i$. Since positive elements from $\hat{S}^{\CC}_1$ are in $\hat{S}_{1}$ and $\tau^{\CC}|_{\hat{S}_1}=\tau$, $\tau^{\CC}$ is positive. Taking $\hat{S}^{\CC}_1$ as $A$  and $\hat{S}^{\CC}_2$ as $S$ in Theorem $\ref{extension}$, $\tau^{\CC}$ is in fact completely positive. Also, $\tau^{\CC}|_{\hat{S}_1}=\tau$ is completely positive. By [$2$, Theorem $3.5$] $D_{L_1}\subseteq D_{L_2}$. By LP-satz [$2$, Corollary $3.7$] for the pair $\hat{L}_1, \hat{L}_2$, there exist $V_j\in \RR^{d\times m}$ and $\mu\in\NN$, such that $ \hat{L}_2=\sum_{j=1}^{\mu}V_{j}^{\ast}\hat{L}_1 V_j$. The theorem is proved.
\end{proof}

\noindent \textbf{Remark.} The matrix polynomials $A_j, B_k$ in Theorem $\ref{simplex}$ are constant while in Theorem \ref{main} they are matrix polynomials.\\

\noindent \textbf{Remark.} If we replace the expression \textit{the set $D_{L_1}(1)$ is an $n$-simplex }in Theorem \ref{simplex} with the expression \textit{the set $\Lin\left\{P_0, P_1,\ldots,P_n\right\}$ is an algebra and has an interior point}, the theorem still holds. Indeed, since the set $\Lin\left\{P_0, P_1,\ldots,P_n\right\}$ is algebra, it is isomorphic to a subalgebra $A$ in $\RR^{n+1}$. Further on, we may assume that this subalgebra separates the $n+1$ components of the vector (i.e., given $1\leq i<j\leq n+1$, we find an element of this algebra, that has distinct $i$-th and $j$-th component). This assumption does not harm, since otherwise there is redundancy in $L_1$ with respect to the polyhedron defined by $L_1$. By identifying $\RR^{n+1}$ with the set of all continuous functions from the set $X=\left\{1,2,\ldots, n\right\}$ to $\RR$, i.e., with $C(X)$, and using Stone Weierstrass theorem, we conclude that the algebra $A$ is dense in $\RR^{n+1}$. But because of finite dimensionality, it is then equal to the full algebra $\RR^{n+1}$. Therefore, the set $\left\{P_0, P_1,\ldots,P_n\right\}$ is lineary independent and since the polyhedron defined by its elements has an interior point, it is an $n$-simplex.\\

\noindent \textbf{Acknowledgments.} I would like to thank my mentor Prof. Dr. Jaka Cimpri\v c for proposing a problem, carefully reading the developing material, giving many valuable comments with ideas for further work and finally helping on correcting mistakes in preliminary versions of this article.

\end{document}